\newcommand{\R}{\mathbb R}
\newcommand{\Z}{\mathbb Z}
\newcommand{\ii}{{\operatorname{i}}}
\newcommand{\dd}{{\operatorname{d}}}
\newcommand{\sn}{{\operatorname{sn}}}
\newcommand{\cn}{{\operatorname{cn}}}
\newcommand{\dn}{{\operatorname{dn}}}
\newcommand{\zn}{{\operatorname{zn}}}
\newcommand{\laK}{\lambda{K}}
\newcommand{\muK}{\mu{K}}
\begin{document}


\title{Some New Properties of Jacobi's Theta Functions\footnote{published in: Journal of Computational and Applied Mathematics {\bf 178} (2005), 419--424.}}
\author{Klaus Schiefermayr\footnote{University of Applied Sciences Upper Austria, School of Engineering and Environmental Sciences, Stelzhamerstrasse\,23, 4600 Wels, Austria, \textsc{klaus.schiefermayr@fh-wels.at}}}
\date{}
\maketitle

\theoremstyle{plain}
\newtheorem{theorem}{Theorem}
\newtheorem{lemma}[theorem]{Lemma}
\theoremstyle{definition}
\newtheorem*{remark}{Remark}

\thispagestyle{empty}

\begin{abstract}
In this paper, a monotonicity property for the quotient of two Jacobi's theta functions with respect to the modulus $k$ is proved.
\end{abstract}

\noindent\emph{Mathematics Subject Classification (2000):} 33E05

\noindent\emph{Keywords:} Jacobi's elliptic functions, Jacobi's theta functions

\section{Introduction and Main Result}


Let $k\in(0,1)$ denote the modulus of Jacobi's elliptic functions $\sn(u)$, $\cn(u)$, and $\dn(u)$, of Jacobi's theta functions $\Theta(u)$, $H(u)$, $H_1(u)$, and $\Theta_1(u)$, and, finally, of Jacobi's zeta function, $\zn(u)$. Here we follow the notation of Carlson and Todd\,\cite{CarlsonTodd}, in other references, like \cite{WhittakerWatson}, Jacobi's zeta function is denoted by $Z(u)$. Let $k':=\sqrt{1-k^2}\in(0,1)$ be the complementary modulus, let $K\equiv K(k)$ and $E\equiv E(k)$ be the complete elliptic integral of the first and second kind, respectively, and let $K'\equiv K'(k):=K(k')$ and $E'\equiv E'(k):=E(k')$.


If we want to point out the dependence of these functions on the modulus $k$, we will write $K(k)$, $K'(k)$, $\sn(u,k)$, $\Theta(u,k)$, etc. In this paper, we follow the old notation of the theta functions, which goes back to Jacobi. There is another notation of the four theta functions given by $\Theta(u,k)=\vartheta_0(v,\tau)=\vartheta_4(v,\tau)$, $H(u,k)=\vartheta_1(v,\tau)$, $H_1(u,k)=\vartheta_2(v,\tau)$ and $\Theta_1(u,k)=\vartheta_3(v,\tau)$, where $\tau=\ii K'/K$ and $v=u/(2K)$ (note that in some references, like \cite{ByrdFriedman} and \cite{Lawden}, $v=u\pi/(2K)$).


For the definitions and many important properties of these functions, see, e.g., \cite{ByrdFriedman}, \cite{Lawden}, \cite{MagnusOberhettingerSoni}, \cite{TanneryMolk}, and \cite{Achieser}.


In \cite{CarlsonTodd}, Carlson and Todd proved that, for each $\lambda\in(0,1)$, the functions $\sn(\laK,k)$ and $\zn(\laK,k)$ are strictly monotone increasing with respect to the modulus $k$, $0<k<1$. In addition, they investigated the degenerating behaviour of these functions as $k\to0$ and especially as $k\to1$. Hence the question arises, whether analogous monotonicity properties hold for the theta functions. Unfortunately, $\Theta(\laK,k)$ does not have the same monotonicity behaviour with respect to the modulus $k$ for every $\lambda\in(0,1)$. Numerical examples show that for small $\lambda$ ($\lambda\leq0.5$), $\Theta(\laK)$ is strictly monotone decreasing, for large $\lambda$ ($\lambda\geq0.6$), $\Theta(\laK)$ is strictly monotone increasing in $k$ and for some $\lambda\in(0.5,0.6)$, $\Theta(\laK)$ is not monotone at all in the whole interval $(0,1)$. However, we are able to prove that for each $\lambda,\mu\in\R$, the quotient $\Theta(\laK)/\Theta(\muK)$ of two theta functions is strictly monotone. In addition, the degenerative bevaviour of $\Theta(\laK)/\Theta(\muK)$ as $k\to0$ and $k\to1$ is given.


\begin{theorem}\label{Theorem_1}\hfill{}
\begin{enumerate}
\item Let $\lambda,\mu\in\R$. If $\cos(\lambda\pi)>\cos(\mu\pi)$ $\bigl[\cos(\lambda\pi)<\cos(\mu\pi)\bigr]$, then $\Theta(\laK)/\Theta(\muK)$ is a positive, strictly monotone decreasing $\bigl[$increasing$\bigr]$ function of the modulus $k\in(0,1)$. If $\cos(\lambda\pi)=\cos(\mu\pi)$, i.e., $\lambda=\mu+2\nu$, $\nu\in\Z$, then $\Theta(\laK)/\Theta(\muK)=1$.
\item Let $\lambda,\mu\in(0,1)$, $k\in(0,1)$, then $\Theta(\laK)/\Theta(\muK)\to1$ as $k\to0$ and
\[
\frac{\Theta(\laK)}{\Theta(\muK)}\sim\Bigl(\frac{k'}{4}\Bigr)^{(\mu-\lambda)(1-(\lambda+\mu)/2)}~\text{as}~k\to1.
\]
\item Let $\mu\in(0,1)$ and $k\in(0,1)$. Then $f(\lambda):=\Theta((\mu-\lambda)K)/\Theta((\mu+\lambda)K)$ is a convex function of $\lambda\in(0,1)$ with $f(0)=f(1)=1$.
\end{enumerate}
\end{theorem}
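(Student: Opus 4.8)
The plan is to prove the three parts with three different tools: the infinite product for $\Theta$ for (i), Jacobi's imaginary transformation for (ii), and logarithmic differentiation via the zeta function $\zn$ for (iii). For (i) and (ii) the natural variable is the nome $q=e^{-\pi K'/K}$, which (being $e^{-\pi K'/K}$) is a strictly increasing bijection of $(0,1)$ onto $(0,1)$ as a function of $k$, so that monotonicity in $k$ and in $q$ coincide, and I will use the product representation $\Theta(\alpha K,k)=\prod_{n\ge1}(1-q^{2n})\bigl(1-2q^{2n-1}\cos(\alpha\pi)+q^{4n-2}\bigr)$; note this shows $\Theta(\alpha K,k)$ depends on $\alpha$ only through $\cos(\alpha\pi)$. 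For (i), the factor $\prod(1-q^{2n})$ cancels, and writing $s_n:=2q^{2n-1}/(1+q^{4n-2})\in(0,1)$ one has $1-2q^{2n-1}\cos(\alpha\pi)+q^{4n-2}=(1+q^{4n-2})(1-s_n\cos(\alpha\pi))$, so
\[
\frac{\Theta(\laK)}{\Theta(\muK)}=\prod_{n\ge1}\frac{1-s_n\cos(\lambda\pi)}{1-s_n\cos(\mu\pi)}.
\]
Each factor is positive (as $0<s_n<1$), equals $1$ when $\cos(\lambda\pi)=\cos(\mu\pi)$, and otherwise is strictly monotone in $s_n$ since $\frac{\partial}{\partial s}\frac{1-s\cos\lambda\pi}{1-s\cos\mu\pi}=\frac{\cos\mu\pi-\cos\lambda\pi}{(1-s\cos\mu\pi)^2}$, which is negative when $\cos\lambda\pi>\cos\mu\pi$. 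As every $s_n$ is strictly increasing in $q$ and the product converges locally uniformly (so that it may be differentiated term by term after taking logarithms), the quotient is a positive, strictly decreasing function of $q$, hence of $k$, in that case; the others are symmetric. (The equality case $\cos\lambda\pi=\cos\mu\pi$ is $\lambda\equiv\pm\mu\pmod2$, of which $\lambda=\mu+2\nu$ is the instance named.)

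For (ii), as $k\to0$ we have $q\to0$ and each factor tends to $1$, so the quotient tends to $1$. For $k\to1$ put $\varepsilon:=K'/K\to0^+$; Jacobi's imaginary transformation rewrites $\Theta(\laK,k)$ as $\varepsilon^{-1/2}e^{-\lambda^2\pi/(4\varepsilon)}\,\vartheta_2\bigl(-\ii\lambda\pi/(2\varepsilon),\tilde q\bigr)$ with the small nome $\tilde q=e^{-\pi/\varepsilon}$. At this purely imaginary argument the $\vartheta_2$-series is $2\tilde q^{1/4}\cosh\frac{\lambda\pi}{2\varepsilon}+2\tilde q^{9/4}\cosh\frac{3\lambda\pi}{2\varepsilon}+\cdots$, whose first term (indeed its part $e^{-\pi/(4\varepsilon)}e^{\lambda\pi/(2\varepsilon)}$) dominates as $\varepsilon\to0$ when $0<\lambda<1$; this gives $\Theta(\laK,k)\sim\varepsilon^{-1/2}e^{-\pi(1-\lambda)^2/(4\varepsilon)}$. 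Dividing, $\Theta(\laK)/\Theta(\muK)\sim\exp\bigl(\tfrac{\pi}{4\varepsilon}[(1-\mu)^2-(1-\lambda)^2]\bigr)$. Finally $K=\log(4/k')+o(1)$ and $K'=\tfrac\pi2+o(1)$ as $k\to1$, whence $1/\varepsilon=\tfrac2\pi\log(4/k')+o(1)$; substituting this together with $(1-\mu)^2-(1-\lambda)^2=-(\mu-\lambda)(2-\mu-\lambda)$ yields the stated $(k'/4)^{(\mu-\lambda)(1-(\lambda+\mu)/2)}$.

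For (iii), fix $\mu\in(0,1)$ and $k\in(0,1)$ and set $\ell(\lambda):=\log f(\lambda)=\log\Theta((\mu-\lambda)K)-\log\Theta((\mu+\lambda)K)$. Using $\frac{d}{du}\log\Theta(u)=\zn(u)$ and $\frac{d}{du}\zn(u)=\dn^2u-E/K$, and noting that only the arguments depend on $\lambda$, I differentiate twice to get $\ell'(\lambda)=-K[\zn((\mu-\lambda)K)+\zn((\mu+\lambda)K)]$ and then $\ell''(\lambda)=K^2[\dn^2((\mu-\lambda)K)-\dn^2((\mu+\lambda)K)]$ (the $E/K$-terms cancel). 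Since $\dn^2$ is even, $2K$-periodic, and symmetric about $K$, the value $\dn^2(vK)$ depends only on the distance of $v$ to $2\Z$ and is strictly decreasing in that distance on $[0,1]$; hence $\ell''(\lambda)\ge0$ is equivalent to $|\mu-\lambda|\le\min(\mu+\lambda,2-\mu-\lambda)$, and for $\mu,\lambda\in(0,1)$ both $|\mu-\lambda|<\mu+\lambda$ (i.e.\ $\mu\lambda>0$) and $|\mu-\lambda|<2-\mu-\lambda$ (i.e.\ $(1-\mu)(1-\lambda)>0$) hold. Thus $\ell$ is convex (in fact strictly convex) on $(0,1)$, so $f=e^{\ell}$ is convex there; and $f(0)=1$ trivially while $f(1)=\Theta((\mu-1)K)/\Theta((\mu+1)K)=1$ because $\Theta$ has period $2K$, so $\Theta((\mu+1)K)=\Theta((\mu-1)K)$.

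The main obstacle I expect is the bookkeeping in (ii): applying the imaginary transformation with the correct Gaussian prefactor, isolating the dominant term of the transformed $\vartheta_2$ at an imaginary argument of size $O(1/\varepsilon)$, and controlling the $o(1)$-error in $1/\varepsilon=\tfrac2\pi\log(4/k')+o(1)$ (which rests on $k'^2\log(1/k')\to0$) so that the exponent comes out exactly $(\mu-\lambda)\bigl(1-(\lambda+\mu)/2\bigr)$. By contrast the algebraic cores of (i) (the sign of the $s$-derivative above) and of (iii) (the inequality $|\mu-\lambda|\le\min(\mu+\lambda,2-\mu-\lambda)$ on $(0,1)^2$) are short.
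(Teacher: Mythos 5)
Your proposal is correct, but it proves the theorem by genuinely different means than the paper. For (i) the paper works directly in the modulus: its Lemma~\ref{Lemma_DerivTheta_k} (a heat-equation-type identity) gives $\frac{\dd}{\dd k}\Theta(\laK)=-\frac{1}{2k{k'}^2}\frac{\partial^2}{\partial u^2}\Theta(u)$ at $u=\laK$, so monotonicity of the quotient reduces to showing that $g(u)=\dn^2(u)+\zn^2(u)$ is even, $2K$-periodic and strictly decreasing on $(0,K)$, which it does via an auxiliary concavity argument; you instead read everything off the $q$-product $\prod_{n\ge1}(1-2q^{2n-1}\cos(\alpha\pi)+q^{4n-2})$ together with strict monotonicity of the nome in $k$ (if you want a one-line justification of the latter, it is exactly the paper's Lemma~\ref{Lemma1}: $\frac{\dd}{\dd k}\{K'/K\}=-\pi/(2k{k'}^2K^2)<0$). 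Your route is more elementary and makes transparent that $\Theta(\alpha K,k)$ depends on $\alpha$ only through $\cos(\alpha\pi)$, while the paper's differential identity is reusable for all four theta functions. For (ii) the paper converts $\log(\Theta(\laK)/\Theta(\muK))$ into an explicit integral involving $\sn,\cn,\dn,\zn$ via its formula \eqref{Log(H/H)} and inserts the Carlson--Todd asymptotics of Lemma~\ref{Lemma_LimitingBehaviour}, whereas you use the modular transformation $\vartheta_4\mapsto\vartheta_2$ plus $K\sim\log(4/k')$, $K'\to\pi/2$; both yield the exponent $(\mu-\lambda)(1-(\lambda+\mu)/2)$, and your attention to the $o(1)$ error in $K/K'$ (resting on ${k'}^2\log(1/k')\to0$) is the right point to control. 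For (iii) the paper computes $f''$ from Lemma~\ref{Lemma_DerivTheta} and asserts positivity of the resulting bracket; your log-convexity computation $\ell''(\lambda)=K^2\bigl(\dn^2((\mu-\lambda)K)-\dn^2((\mu+\lambda)K)\bigr)$ together with the distance-to-$2\Z$ comparison supplies precisely the inequality that such positivity requires (indeed, the correct bracket is $(\zn((\mu-\lambda)K)+\zn((\mu+\lambda)K))^2+\dn^2((\mu-\lambda)K)-\dn^2((\mu+\lambda)K)$, so your argument is, if anything, the more complete one), and your verification of $f(1)=1$ via the $2K$-periodicity of $\Theta$ replaces the paper's use of $\Theta_1(-u)=\Theta_1(u)$. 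In short: same statement, different machinery --- $q$-product, modular transformation and log-convexity on your side versus the paper's uniform use of its $k$-derivative lemma and the integral representation.
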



\begin{remark}
By the relation $\Theta_1(u)=\Theta(u+K)$, one gets analogous monotonicity properties for the quotients $\Theta_1(\laK)/\Theta_1(\muK)$, $\Theta(\laK)/\Theta_1(\muK)$, and $\Theta_1(\laK)/\Theta(\muK)$.
\end{remark}


In \cite{PehSch}, we considered polynomials, whose $[-1,1]$ inverse image consists of two Jordan arcs, i.e., we characterized polynomials $P_n$, for which $P_n^{-1}([-1,1])$ consists of two Jordan arcs (in general, $P_n^{-1}([-1,1])$ consists of $n$ Jordan arcs). Since these polynomials $P_n$ can be given with the help of an elliptic integral, Jacobi's elliptic and theta functions appear in a natural way. When describing the shape of the two Jordan arcs, we need the above theorem, see Theorem\,22, Lemma\,33 and Lemma\,34 of \cite{PehSch}.

\section{Proof of the Main Result}


First, we collect some derivation formulas, which are an immediate consequence of formula (710.00) of \cite{ByrdFriedman}.


\begin{lemma}\label{Lemma1}
Let $k\in(0,1)$. Then
\begin{gather*}
\frac{\dd K}{\dd{k}}=\frac{E-{k'}^2K}{k{k'}^2}, \quad
\frac{\dd K'}{\dd{k}}=\frac{k^2K'-E'}{k{k'}^2}, \\
\frac{\dd}{\dd{k}}\Bigl\{\frac{1}{K}\Bigr\}=\frac{{k'}^2K-E}{k{k'}^2K^2},
\quad\frac{\dd}{\dd{k}}\Bigl\{\frac{K'}{K}\Bigr\}=\frac{-\pi}{2k{k'}^2K^2}
\end{gather*}
\end{lemma}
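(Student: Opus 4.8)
The plan is to treat the first identity as given and to deduce the other three from it by the chain rule, the quotient rule, and one classical relation. Formula (710.00) of \cite{ByrdFriedman} supplies $\dd K/\dd k=(E-{k'}^2K)/(k{k'}^2)$ directly (up to trivial rearrangement), so the first line needs no argument. The work lies entirely in the bookkeeping for the remaining three.

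For $\dd K'/\dd k$ I would write $K'=K(k')$ with $k'=\sqrt{1-k^2}$ and differentiate through the chain rule, using $\dd k'/\dd k=-k/k'$. The outer factor is the first formula with modulus $k'$ in place of $k$; the point to watch is that the complement of $k'$ is $\sqrt{1-{k'}^2}=k$, so $k$ and $k'$ swap roles and one obtains $\dd K(k')/\dd k'=(E'-k^2K')/(k'k^2)$. Multiplying the two factors and clearing the sign gives $(k^2K'-E')/(k{k'}^2)$, as stated. The third identity is then immediate from $\dd\{1/K\}/\dd k=-K^{-2}\,\dd K/\dd k$, into which the first formula is substituted directly.

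For the last identity I would apply the quotient rule,
\[
\frac{\dd}{\dd k}\Bigl\{\frac{K'}{K}\Bigr\}=\frac{1}{K^2}\Bigl(K\,\frac{\dd K'}{\dd k}-K'\,\frac{\dd K}{\dd k}\Bigr),
\]
and substitute the two formulas just obtained. The numerator of the bracketed expression equals $\frac{1}{k{k'}^2}\bigl[(k^2K'-E')K-K'(E-{k'}^2K)\bigr]$; combining the two $KK'$ terms through $k^2+{k'}^2=1$ reduces the bracket to $-(EK'+E'K-KK')$. The single real obstacle is recognising here that $EK'+E'K-KK'=\pi/2$ is precisely Legendre's relation, whose use collapses the bracket to $-\pi/2$ and yields $-\pi/(2k{k'}^2K^2)$. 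Every remaining step is routine algebra, so the lemma rests on (710.00) together with Legendre's relation.
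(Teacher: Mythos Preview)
Your derivation is correct and is exactly the kind of routine computation the paper has in mind: the paper does not prove Lemma~\ref{Lemma1} at all but simply states that the four formulas are an immediate consequence of (710.00) of \cite{ByrdFriedman}. Your argument supplies the missing bookkeeping---chain rule for $\dd K'/\dd k$, reciprocal rule for $\dd\{1/K\}/\dd k$, and the quotient rule together with Legendre's relation $EK'+E'K-KK'=\pi/2$ for $\dd\{K'/K\}/\dd k$---and each step checks out.
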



Concerning the limiting behaviour of $\sn(\laK)$, etc., as $k\to1$, Carlson and Todd\,\cite{CarlsonTodd} have proved the following.


\begin{lemma} \label{Lemma_LimitingBehaviour}
Let $0<\lambda<1$, then, as $k\to1$, we have
\begin{gather*}
K\sim\log(\tfrac4{k'}),\quad\sn(\laK)\sim1-2(\tfrac{k'}4)^{2\lambda},\quad\cn(\laK)\sim\dn(\laK)\sim2(\tfrac{k'}4)^\lambda,\\
\zn(\laK)\sim 1-\lambda-2(\tfrac{k'}4)^{2\lambda}.
\end{gather*}
\end{lemma}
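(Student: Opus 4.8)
The plan is to exploit the two facts that drive all four asymptotics. As $k\to1$ one has $k'\to0$ with $K\to\infty$ but $K'=K(k')\to K(0)=\pi/2$, and the Jacobi elliptic functions degenerate to hyperbolic functions, $\sn(u,1)=\tanh u$ and $\cn(u,1)=\dn(u,1)=\operatorname{sech} u$. I would first pin down the precise rate $K\sim\log(4/k')$, then transfer this hyperbolic degeneration into sharp asymptotics for $\sn(\laK)$, $\cn(\laK)$, $\dn(\laK)$ at the growing argument $u=\laK$, and finally read off $\zn(\laK)$ from its integral representation.

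First I would establish $K\sim\log(4/k')$. This is the classical near-singular expansion of the complete elliptic integral: from the arithmetic--geometric-mean representation $K=\pi/\bigl(2\,\mathrm{AGM}(1,k')\bigr)$ (or, equivalently, from the hypergeometric series of $K$ about $k=1$) one obtains $K=\log(4/k')+O\bigl(k'^2\log(1/k')\bigr)$, hence $e^{-\laK}=(k'/4)^{\lambda}\,(1+o(1))$. Along the way I record the companion limits $K'\to\pi/2$ and $E\to E(1)=1$, so that $E/K\to0$; these are needed for the $\zn$ step.

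For $\sn$, $\cn$, $\dn$ the genuine subtlety is that $u=\laK\to\infty$, so one cannot simply substitute $k=1$ into the hyperbolic limit; what is required is a uniform version of the degeneration. The cleanest rigorous device is Jacobi's imaginary (modular) transformation. Writing $v=u/(2K)=\lambda/2$ (fixed) and $\tau=\ii K'/K$, the ordinary nome $q=e^{-\pi K'/K}\to1$ as $k\to1$, which is exactly the bad limit for the standard $q$-series; but under $\tau\mapsto-1/\tau=\ii K/K'$ the complementary nome $q_1=e^{-\pi K/K'}\to0$, and the transformed theta series converge geometrically. Expressing $\sn$, $\cn$, $\dn$ through the theta quotients $H/\Theta$, $H_1/\Theta$, $\Theta_1/\Theta$ with their modulus-dependent prefactors $1/\sqrt{k}$, $\sqrt{k'/k}$, $\sqrt{k'}$, the combination of these prefactors with the leading transformed terms produces precisely the hyperbolic functions, with corrections controlled by $q_1\to0$ uniformly for $\lambda$ in compact subsets of $(0,1)$. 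Inserting the elementary expansions $\tanh x=1-2e^{-2x}+O(e^{-4x})$ and $\operatorname{sech} x=2e^{-x}+O(e^{-3x})$ at $x=\laK$, and substituting $e^{-\laK}=(k'/4)^{\lambda}(1+o(1))$ from the first step, yields the three stated asymptotics.

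Finally, for $\zn$ I would use the defining relation $\zn(u)=\int_0^u\dn^2(t)\,\dd t-(E/K)\,u=E(\operatorname{am} u)-(E/K)\,u$. At $u=\laK$ the subtracted term is $(E/K)\laK=\lambda E\to\lambda$, while the integral tends to its hyperbolic limit, $\int_0^{\laK}\dn^2\to\int_0^{\laK}\operatorname{sech}^2=\tanh(\laK)$ (using $\dn^2\to\operatorname{sech}^2$ together with the integrability of $\operatorname{sech}^2$ on $[0,\infty)$). Hence $\zn(\laK)\sim\tanh(\laK)-\lambda\sim1-\lambda-2(k'/4)^{2\lambda}$, as claimed. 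The main obstacle throughout is the middle step: converting the merely pointwise degeneration $\sn\to\tanh$, $\cn,\dn\to\operatorname{sech}$ into asymptotics valid at the growing argument $\laK$. This is exactly what the passage to the complementary nome $q_1\to0$ resolves, since it replaces the divergent $q\to1$ series by geometrically convergent ones whose error terms stay uniform as $k\to1$.
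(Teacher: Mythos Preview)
The paper does not actually prove this lemma: it is quoted verbatim as a result of Carlson and Todd, with a citation and no argument. So there is no in-paper proof to compare your proposal against; you are supplying a proof where the author simply imports one.

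Your outline is a sound way to establish the result independently. The key observation --- that the standard $q$-series diverge as $k\to1$ but the modular transformation to the complementary nome $q_1=e^{-\pi K/K'}\to0$ turns everything into rapidly convergent series, which in leading order reproduce the hyperbolic degeneration $\sn\to\tanh$, $\cn,\dn\to\operatorname{sech}$ uniformly at the growing argument $u=\laK$ --- is exactly the right mechanism, and together with $e^{-\laK}\sim(k'/4)^{\lambda}$ it yields the stated $\sn$, $\cn$, $\dn$ asymptotics. One point to tighten: in the $\zn$ step your dominated-convergence remark (``$\dn^2\to\operatorname{sech}^2$ together with integrability of $\operatorname{sech}^2$'') gives only the leading value $1-\lambda$; to justify the correction $-2(k'/4)^{2\lambda}$ you need the error $\int_0^{\laK}\bigl(\dn^2-\operatorname{sech}^2\bigr)$ to be $o\bigl((k'/4)^{2\lambda}\bigr)$, which does not follow from dominated convergence alone. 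A cleaner route is to use $\zn(u)=\Theta'(u)/\Theta(u)$ and the same $q_1$-expansion already set up for the theta quotients, or to feed the sharp $\dn$ asymptotic back into the integral with an explicit remainder bound.
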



Moreover, we will need the following formulas for the derivatives of the theta functions, which are a direct consequence of (1053.01), (1052.02) and (731.01)--(731.03) of \cite{ByrdFriedman}.


\begin{lemma}\label{Lemma_DerivTheta}
The following relations hold:
\[
\begin{aligned}
\frac{\partial}{\partial u}\bigl\{\Theta(u)\bigr\}&=\Theta(u)\zn(u),\\
\frac{\partial}{\partial u}\bigl\{H(u)\bigr\}&=\sqrt{k}\,\Theta(u)\bigl(\cn(u)\dn(u)+\sn(u)\zn(u)\bigr),\\
\frac{\partial}{\partial u}\bigl\{H_1(u)\bigr\}&=\tfrac{\sqrt{k}}{\sqrt{k'}}\,\Theta(u)\bigl(-\sn(u)\dn(u)+\cn(u)\zn(u)\bigr),\\
\frac{\partial}{\partial u}\bigl\{\Theta_1(u)\bigr\}&=\tfrac{1}{\sqrt{k'}}\,\Theta(u)\bigl(-k^2\sn(u)\cn(u)+\dn(u)\zn(u)\bigr),
\end{aligned}
\]
and
\[
\begin{aligned}
\frac{\partial^2}{\partial u^2}\bigl\{\Theta(u)\bigr\}&=\Theta(u)\bigl(\dn^2(u)+\zn^2(u)-E/K\bigr),\\
\frac{\partial^2}{\partial u^2}\bigl\{H(u)\bigr\}&=\sqrt{k}\,\Theta(u)\Bigl(-k^2\sn(u)\cn^2(u)+2\cn(u)\dn(u)\zn(u)\\
&\qquad+\sn(u)\bigl(\zn^2(u)-E/K\bigr)\Bigr),\\
\frac{\partial^2}{\partial u^2}\bigl\{H_1(u)\bigr\}&=\tfrac{\sqrt{k}}{\sqrt{k'}}\,\Theta(u)
\Bigl(-k^2\sn^2(u)\cn(u)-2\sn(u)\dn(u)\zn(u)\\
&\qquad+\cn(u)\bigl(\zn^2(u)-E/K\bigr)\Bigr),\\
\frac{\partial^2}{\partial u^2}\bigl\{\Theta_1(u)\bigr\}&=\tfrac{1}{\sqrt{k'}}\,\Theta(u)
\Bigl(\dn(u)\bigl(1-k^2\cn^2(u)\bigr)-2k^2\sn(u)\cn(u)\zn(u)\\
&\qquad+\dn(u)\bigl(\zn^2(u)-E/K\bigr)\Bigr).
\end{aligned}
\]
\end{lemma}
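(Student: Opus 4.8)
The plan is to derive all eight identities from three ingredients, each of which is a direct consequence of the formulas cited in the statement. First, the product representations of the three remaining theta functions in terms of $\Theta$ and the Jacobi elliptic functions,
\[
H(u)=\sqrt{k}\,\Theta(u)\sn(u),\quad
H_1(u)=\tfrac{\sqrt{k}}{\sqrt{k'}}\,\Theta(u)\cn(u),\quad
\Theta_1(u)=\tfrac{1}{\sqrt{k'}}\,\Theta(u)\dn(u),
\]
which come from (1052.02) and (1053.01) of \cite{ByrdFriedman}. Second, the logarithmic derivative of $\Theta$, namely $\frac{\partial}{\partial u}\Theta(u)=\Theta(u)\zn(u)$, which is the first stated identity and which I would take directly from the same source (it is essentially the defining relation between $\zn$ and $\Theta$). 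Third, the elementary derivative formulas (731.01)--(731.03),
\[
\tfrac{\dd}{\dd u}\sn(u)=\cn(u)\dn(u),\quad
\tfrac{\dd}{\dd u}\cn(u)=-\sn(u)\dn(u),\quad
\tfrac{\dd}{\dd u}\dn(u)=-k^2\sn(u)\cn(u),
\]
together with the derivative of the zeta function $\frac{\dd}{\dd u}\zn(u)=\dn^2(u)-E/K$.

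For the three remaining first derivatives I would simply apply the product rule to each product representation, substituting $\Theta'(u)=\Theta(u)\zn(u)$ for the $\Theta$-factor and the appropriate elliptic-function derivative for the other factor. For instance, $\frac{\partial}{\partial u}H(u)=\sqrt{k}\bigl(\Theta'(u)\sn(u)+\Theta(u)\cn(u)\dn(u)\bigr)=\sqrt{k}\,\Theta(u)\bigl(\sn(u)\zn(u)+\cn(u)\dn(u)\bigr)$, which is the claimed formula; the identities for $H_1$ and $\Theta_1$ follow in the same way, using $\cn'=-\sn\,\dn$ and $\dn'=-k^2\sn\,\cn$, respectively, and producing the $-\sn\dn$ and $-k^2\sn\cn$ terms.

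For the second derivatives I would differentiate the four first-derivative formulas once more. Writing each as $\frac{\partial}{\partial u}\{\,\cdot\,\}=c\,\Theta(u)\,g(u)$ for a suitable constant $c$ and inner function $g$, one gets $c\,\Theta(u)\bigl(\zn(u)g(u)+g'(u)\bigr)$, and $g'$ is evaluated by the product rule with the elliptic-function derivatives and with $\zn'(u)=\dn^2(u)-E/K$. The case of $\Theta$ is immediate: $\frac{\partial}{\partial u}\{\Theta\zn\}=\Theta(\zn^2+\dn^2-E/K)$. For the others the computation is longer; in the $H$ case, for example, $g=\sn\,\zn+\cn\,\dn$ gives $\zn g+g'=\sn\,\zn^2+2\cn\,\dn\,\zn-\sn E/K-k^2\sn\,\cn^2$, which is exactly the stated bracket after grouping $\sn(\zn^2-E/K)$.

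The main obstacle is purely organizational: in each second-derivative computation several terms appear twice with opposite signs and must cancel before the answer collapses to the stated form. In the $H$ case the two copies of $\sn\,\dn^2$ (one from differentiating $\sn\,\zn$ through $\zn'=\dn^2-E/K$, the other from differentiating $\cn\,\dn$) cancel, leaving the clean expression above; analogous cancellations occur for $H_1$ and $\Theta_1$. The key bookkeeping device is to recognize that the combination $\zn^2(u)-E/K$ always emerges from pairing the $\zn^2$ produced by $\Theta'/\Theta=\zn$ with the $-E/K$ produced by $\zn'=\dn^2-E/K$, and to collect terms accordingly; once this grouping is fixed, each of the four identities reduces to a short, routine simplification.
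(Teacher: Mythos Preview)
Your proposal is correct and follows exactly the route indicated in the paper, which does not give a detailed proof but simply notes that the lemma is a direct consequence of (1053.01), (1052.02) and (731.01)--(731.03) of \cite{ByrdFriedman}; you have merely spelled out the product-rule computations that these citations imply. The only ingredient you add beyond the cited formulas is $\zn'(u)=\dn^2(u)-E/K$, which is standard and indeed needed for the second-derivative identities.
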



\begin{remark}
By (123.01), (123.03) of \cite{ByrdFriedman} and Lemma\,\ref{Lemma_DerivTheta}, the relation
\[
\frac{\partial}{\partial{u}}\Bigl\{\log\Bigl(\frac{H(v-u)}{H(v+u)}\Bigr)\Bigr\}
=\frac{2\,\sn(v)\,\cn(v)\,\dn(v)}{\sn^2(u)-\sn^2(v)}-2\,\zn(v)
\]
holds which implies
\begin{equation} \label{Log(H/H)}
\log\Bigl(\frac{H(v-u)}{H(v+u)}\Bigr)=\int_0^u\frac{2\,\sn(v)\,\cn(v)\,\dn(v)}{\sn^2(u)-\sn^2(v)}\,\dd{u}-2\,u\,\zn(v).
\end{equation}
Analogous formulas can be obtained for other ratios of theta functions.
\end{remark}


The next lemma gives the derivatives with respect to the modulus $k$ for the four theta functions $\Theta(u)$, $H(u)$, $H_1(u)$, $\Theta_1(u)$, where $u=\laK$. Note that $\laK$ as well as the theta functions themselves depend on the modulus $k$.


\begin{lemma}\label{Lemma_DerivTheta_k}
For $\lambda\in\R$, the derivatives with respect to $k$ of the four theta functions $\Theta(\laK)$, $H(\laK)$, $H_1(\laK)$, $\Theta_1(\laK)$ are given by
\[
\begin{aligned}
\frac{\dd}{\dd k}\bigl\{\Theta(\laK)\bigr\}&=-\frac{1}{2k{k'}^2}\,\frac{\partial^2}{\partial{u}^2}
\bigl\{\Theta(u)\bigr\}\Bigl|_{u=\laK}\\
&=-\frac{1}{2k{k'}^2}\,\Theta(\laK)\bigl(\dn^2(\laK)+\zn^2(\laK)-E/K\bigr),\\
\frac{\dd}{\dd{k}}\bigl\{H(\laK)\bigr\}&=-\frac{1}{2k{k'}^2}\,\frac{\partial^2}{\partial{u}^2}
\bigl\{H(u)\bigr\}\Bigl|_{u=\laK}\\
\frac{\dd}{\dd k}\bigl\{H_1(\laK)\bigr\}&=-\frac{1}{2k{k'}^2}\,\frac{\partial^2}{\partial{u}^2}
\bigl\{H_1(u)\bigr\}\Bigl|_{u=\laK}\\
\frac{\dd}{\dd k}\bigl\{\Theta_1(\laK)\bigr\}&=-\frac{1}{2k{k'}^2}\,\frac{\partial^2}{\partial{u}^2}
\bigl\{\Theta_1(u)\bigr\}\Bigl|_{u=\laK}\\
\end{aligned}
\]
\end{lemma}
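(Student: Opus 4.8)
The plan is to convert the $k$-derivative into a $u$-derivative by means of the classical heat equation for theta functions. Recall from the Introduction that, with $v=u/(2K)$ and $\tau=\ii K'/K$, one has $\Theta(u,k)=\vartheta_4(v,\tau)$, $H(u,k)=\vartheta_1(v,\tau)$, $H_1(u,k)=\vartheta_2(v,\tau)$, $\Theta_1(u,k)=\vartheta_3(v,\tau)$. The decisive observation is that the substitution $u=\laK$ makes the first argument $v=\lambda/2$ \emph{independent} of $k$; hence each of $\Theta(\laK),H(\laK),H_1(\laK),\Theta_1(\laK)$ depends on the modulus only through $\tau$, equivalently only through the nome $q=e^{\ii\pi\tau}=e^{-\pi K'/K}$.

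First I would record the heat equation. Writing $\vartheta_j(v,\tau)$ as its standard Fourier series $\sum_m c_m\,q^{m^2}e^{2\pi\ii mv}$ (with $m$ ranging over $\Z$ for $\vartheta_3,\vartheta_4$ and over $\Z+\tfrac12$ for $\vartheta_1,\vartheta_2$) and differentiating term by term, each summand satisfies $\partial_v^2\bigl(q^{m^2}e^{2\pi\ii mv}\bigr)=-4\pi^2m^2\,q^{m^2}e^{2\pi\ii mv}=-4\pi^2\,q\,\partial_q\bigl(q^{m^2}e^{2\pi\ii mv}\bigr)$, hence
\[
\frac{\partial^2\vartheta_j}{\partial v^2}=-4\pi^2\,q\,\frac{\partial\vartheta_j}{\partial q},\qquad j=1,2,3,4 .
\]
Because $v=u/(2K)$ we have $\partial/\partial v=2K\,\partial/\partial u$ at fixed $k$, so $\partial^2/\partial v^2=4K^2\,\partial^2/\partial u^2$, and the heat equation becomes
\[
q\,\frac{\partial\vartheta_j}{\partial q}=-\frac{K^2}{\pi^2}\,\frac{\partial^2\vartheta_j}{\partial u^2}\qquad(j=1,2,3,4).
\]

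Next I would apply the chain rule. Since $\Theta(\laK)=\vartheta_4(\lambda/2,\tau)$ and the $k$-dependence is entirely through $q=q(k)$,
\[
\frac{\dd}{\dd k}\bigl\{\Theta(\laK)\bigr\}=\frac{\partial\vartheta_4}{\partial q}\,\frac{\dd q}{\dd k}=\Bigl(q\,\frac{\partial\vartheta_4}{\partial q}\Bigr)\,\frac{1}{q}\frac{\dd q}{\dd k}.
\]
From $\log q=-\pi K'/K$ and Lemma\,\ref{Lemma1} one gets $\dfrac{1}{q}\dfrac{\dd q}{\dd k}=-\pi\,\dfrac{\dd}{\dd k}\Bigl\{\dfrac{K'}{K}\Bigr\}=\dfrac{\pi^2}{2k{k'}^2K^2}$, so that, inserting the heat equation,
\[
\frac{\dd}{\dd k}\bigl\{\Theta(\laK)\bigr\}=-\frac{K^2}{\pi^2}\,\frac{\partial^2\Theta}{\partial u^2}\Bigl|_{u=\laK}\cdot\frac{\pi^2}{2k{k'}^2K^2}=-\frac{1}{2k{k'}^2}\,\frac{\partial^2}{\partial u^2}\bigl\{\Theta(u)\bigr\}\Bigl|_{u=\laK} .
\]
The second displayed form for $\Theta$ then follows by substituting the expression for $\partial^2\Theta/\partial u^2$ from Lemma\,\ref{Lemma_DerivTheta}. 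Since $H,H_1,\Theta_1$ obey the very same heat equation and are likewise frozen in their first argument at $u=\laK$, the three remaining identities come out verbatim.

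I do not anticipate a real obstacle: the proof amounts to the heat equation, the chain rule, and a cancellation of the factors $\pi^2$ and $K^2$. The only step that needs a moment's care is the first one — noticing that $u=\laK$ freezes the elliptic argument $v=\lambda/2$, so that the \emph{total} $k$-derivative collapses to the single term $(\partial_q\vartheta_j)(\dd q/\dd k)$ rather than also producing a $\partial_v$ contribution. (Alternatively one could bypass the theta series by quoting a Byrd--Friedman formula for $\partial\Theta(u,k)/\partial k$ at fixed $u$ and combining it with $\partial_u\Theta$ and $\dd K/\dd k$ via the two-slot chain rule $\frac{\dd}{\dd k}\Theta(\laK)=\lambda\,\frac{\dd K}{\dd k}\,\partial_u\Theta+\partial_k\Theta$; but the heat-equation route is shorter and makes clear at once why all four theta functions satisfy the same relation.)
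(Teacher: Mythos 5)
Your proof is correct and follows essentially the same route as the paper: the theta heat equation (your nome form $\partial_v^2\vartheta_j=-4\pi^2 q\,\partial_q\vartheta_j$ is equivalent to the paper's $\partial_v^2\vartheta_j=4\ii\pi\,\partial_\tau\vartheta_j$) combined with the chain rule and the formula for $\frac{\dd}{\dd k}\bigl\{K'/K\bigr\}$ from Lemma~\ref{Lemma1}. The only difference is organizational: by observing that $u=\laK$ freezes $v=\lambda/2$, you make the first-order terms never appear, whereas the paper computes $\partial_k\Theta(u,k)$ at fixed $u$ and then lets the two $\partial_u$-contributions cancel in the two-slot chain rule.
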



\begin{proof}
The four theta functions $\vartheta_j(v,\tau)$, $j=1,2,3,4$, satisfy a differential equation of the form
\[
\frac{\partial^2}{\partial v^2}\bigl\{\vartheta_j(v,\tau)\bigr\}
=4\ii\pi\,\frac{\partial}{\partial\tau}\bigl\{\vartheta_j(v,\tau)\bigr\},
\]
see \cite[p.\ 375]{MagnusOberhettingerSoni}. By Lemma\,\ref{Lemma1} and since $\Theta(u,k)=\vartheta_4(\tfrac{u}{2K},\tau)$, where $\tau=\ii{K'}/K$,
\begin{align*}
\frac{\partial}{\partial k}\bigl\{\Theta(u,k)\bigr\}
&=\frac{\partial}{\partial{v}}\bigl\{\vartheta_4(\tfrac{u}{2K},\tau)\bigr\}\,
\frac{\dd}{\dd k}\bigl\{\frac{u}{2K}\bigr\}
+\frac{\partial}{\partial\tau}\bigl\{\vartheta_4(\tfrac{u}{2K},\tau)\bigr\}\,
\frac{\dd\tau}{\dd k} \\
&=\frac{u({k'}^2K-E)}{k{k'}^2K}\,\frac{\partial}{\partial{u}}\bigl\{\Theta(u,k)\bigr\}
-\frac{1}{2k{k'}^2}\,\frac{\partial^2}{\partial{u}^2}\bigl\{\Theta(u,k)\bigr\}.
\end{align*}
Thus, for the derivative with respect to $k$, by Lemma\,\ref{Lemma1}, we get
\begin{align*}
&\frac{\dd}{\dd{k}}\bigl\{\Theta(\laK,k)\bigr\}
=\frac{\dd}{\dd{k}}\bigl\{\laK\bigr\}\,\frac{\partial}{\partial{u}}
\bigl\{\Theta(u,k)\bigr\}\Bigl|_{u=\laK}
+\frac{\partial}{\partial k}\bigl\{\Theta(u,k)\bigr\}\Bigl|_{u=\laK} \\
&=\frac{\lambda(E-{k'}^2K)}{k{k'}^2}\,\frac{\partial}{\partial{u}}
\bigl\{\Theta(u,k)\bigr\}\Bigl|_{u=\laK}
+\frac{\laK({k'}^2K-E)}{k{k'}^2K}\,\frac{\partial}{\partial{u}}
\bigl\{\Theta(u,k)\bigr\}\Bigl|_{u=\laK} \\
&\quad-\frac{1}{2k{k'}^2}\,\frac{\partial^2}{\partial{u}^2}
\bigl\{\Theta(u,k)\bigr\}\Bigl|_{u=\laK} \\
&=-\frac{1}{2k{k'}^2}\,\frac{\partial^2}{\partial{u}^2}
\bigl\{\Theta(u,k)\bigr\}\Bigl|_{u=\laK}.
\end{align*}
Since the last identity holds for the other three theta functions as well, this gives the assertion.
\end{proof}


\begin{proof}[\bf Proof of Theorem 1]\hfill{}
\begin{enumerate}
\item By Lemma\,\ref{Lemma_DerivTheta_k},
\begin{align*}
\frac{\dd}{\dd{k}}\Bigl\{\frac{\Theta(\laK)}{\Theta(\muK)}\Bigr\}
&=-\frac1{2k{k'}^2}\frac{\Theta(\laK)}{\Theta(\muK)} \\
&\quad\times\bigl(\dn^2(\laK)+\zn^2(\laK)-\dn^2(\muK)-\zn^2(\muK)\bigr).
\end{align*}
Thus, it remains to be shown that $g(u):=\dn^2(u)+\zn^2(u)$ satisfies the inequality $g(\laK)-g(\muK)>0$ $[<0]$ for all $\lambda,\mu\in\R$ with $\cos(\lambda\pi)>[<]\cos(\mu\pi)$. This property holds since $g(-u)=g(u)$, $g(u+2K)=g(u)$ and $g(u)$ is a positive, strictly monotone decreasing function in $(0,K)$. Concerning the monotonicity of $g(u)$, note that
\[
g'(u)=-2\dn^2(u)\Bigl(\underbrace{\frac{k^2\sn(u)\cn(u)}{\dn(u)}-\zn(u)}_{=:h(u)}\Bigr)-\frac{2\,E\,\zn(u)}{K},
\]
where $h(0)=h(K)=0$ and
\[
h''(u)=-\frac{2k^2{k'}^2\sn(u)\cn(u)}{\dn^3(u)}<0 \quad\text{for}~u\in(0,K).
\]
Thus $h(u)>0$ and therefore $g'(u)<0$ for $u\in(0,K)$.
\item Assume that $\lambda<\mu$ and let $\alpha:=(\lambda+\mu)/2$, $\beta:=(\mu-\lambda)/2$, i.e.\ $\lambda=\alpha-\beta$, $\mu=\alpha+\beta$, $\alpha,\beta\in(0,1)$. By (1052.02) of \cite{ByrdFriedman} and \eqref{Log(H/H)}, we get
\begin{gather*}
\log\Bigl(\frac{\Theta(\laK)}{\Theta(\muK)}\Bigr)=\log\Bigl(\frac{\sn(\muK)H(\laK)}{\sn(\laK)H(\muK)}\Bigr)=\log(\sn(\muK))-\log(\sn(\laK))\\
+2K\sn(\alpha K)\cn(\alpha K)\dn(\alpha{K})\int_{0}^{\beta}\frac{\dd\nu}{\sn^2(\nu K)-\sn^2(\alpha{K})}-2\beta{K}\zn(\alpha{K}).
\end{gather*}
Thus, as $k\to0$,
\begin{gather*}
\log\Bigl(\frac{\Theta(\laK)}{\Theta(\muK)}\Bigr)\sim \log(\sin(\tfrac{\mu\pi}{2}))-\log(\sin(\tfrac{\lambda\pi}{2}))\\
+\pi\sin(\tfrac{\alpha\pi}{2})\cos(\tfrac{\alpha\pi}{2})
\int_{0}^{\beta}\frac{\dd\nu}{\sin^2(\frac{\nu\pi}{2})-\sin^2(\frac{\alpha\pi}{2})}=0,
\end{gather*}
and, as $k\to1$, by Lemma\,\ref{Lemma_LimitingBehaviour},
$a:=k'/4$,
\begin{align*}
\log&\Bigl(\frac{\Theta(\laK)}{\Theta(\muK)}\Bigr)\sim \log(1-2a^{2\mu})-\log(1-2a^{2\lambda})\\
&-2\log(a)(1-2a^{2\alpha})4a^{2\alpha}\int_{0}^{\beta}\frac{\dd\nu}{4(a^{2\alpha}-a^{2\nu})}+2\beta(1-\alpha-2a^{2\alpha})\log(a)\\
&\sim 2\log(a)\int_{0}^{\beta}\frac{\dd\nu}{a^{2(\nu-\alpha)}-1}+2\beta(1-\alpha)\log(a)\\
&=-2\beta\log(a)+\log(a^{2(\beta-\alpha)}-1)-\log(a^{-2\alpha}-1)+2\beta(1-\alpha)\log(a)\\
&\sim -2\beta\log(a)+2(\beta-\alpha)\log(a)+2\alpha\log(a)+2\beta(1-\alpha)\log(a)\\
&=2\beta(1-\alpha)\log(a).
\end{align*}
If $\lambda>\mu$, note that by the above proved result
\begin{gather*}
\log\frac{\Theta(\laK)}{\Theta(\muK)} =-\log\frac{\Theta(\muK)}{\Theta(\laK)}
\sim-(\lambda-\mu)(1-(\lambda+\mu)/2)\log{a}\\
=(\mu-\lambda)(1-(\lambda+\mu)/2)\log{a}.
\end{gather*}
\item Obviously, $f(0)=1$ and, by (1051.02) and (1051.03) of \cite{ByrdFriedman},
\[
f(1)=\frac{\Theta(\muK-K)}{\Theta(\muK+K)}=\frac{\Theta_1(-\muK)}{\Theta_1(\muK)}=1.
\]
Further, by Lemma\,\ref{Lemma_DerivTheta},
\begin{align*}
f''(\lambda)=K^2f(\lambda)\Bigl(\bigl(&\zn((\mu-\lambda)K)+\zn((\mu+\lambda)K)\bigr)^2\\
&+\dn^2((\mu-\lambda)K)-\dn^2((\mu+\lambda)K)+\frac{2E}{K^2}\Bigr)>0
\end{align*}
for every $\lambda\in(0,1)$, which gives the assertion.
\end{enumerate}
\end{proof}


\bibliographystyle{amsplain}

\bibliography{ThetaFunctions}

\providecommand{\bysame}{\leavevmode\hbox to3em{\hrulefill}\thinspace}
\providecommand{\MR}{\relax\ifhmode\unskip\space\fi MR }
\providecommand{\MRhref}[2]{%
  \href{http://www.ams.org/mathscinet-getitem?mr=#1}{#2}
}
\providecommand{\href}[2]{#2}
\begin{thebibliography}{1}

\bibitem{Achieser}
N.I. Achieser, \emph{Elements of the theory of elliptic functions},
  Translations of Mathematical Monographs, vol.~79, American Mathematical
  Society, 1990.

\bibitem{ByrdFriedman}
P.F. Byrd and M.D. Friedman, \emph{Handbook of elliptic integrals for engineers
  and scientists}, Springer, 1971.

\bibitem{CarlsonTodd}
B.C. Carlson and J.~Todd, \emph{The degenerating behavior of elliptic
  functions}, SIAM J. Numer. Anal. \textbf{20} (1983), 1120--1129.

\bibitem{Lawden}
D.F. Lawden, \emph{Elliptic functions and applications}, Springer, 1989.

\bibitem{MagnusOberhettingerSoni}
W.~Magnus, F.~Oberhettinger, and R.P. Soni, \emph{Formulas and theorems for the
  special functions of mathematical physics}, Springer, 1966.

\bibitem{PehSch}
F.~Peherstorfer and K.~Schiefermayr, \emph{Description of inverse polynomial
  images which consist of two {J}ordan arcs with the help of {J}acobi's
  elliptic functions}, Comput. Methods Funct. Theory \textbf{4} (2004),
  355--390.

\bibitem{TanneryMolk}
J.~Tannery and J.~Molk, \emph{\'{E}l\'ements de la th\'eorie des fonctions
  elliptiques. {T}omes {I}, {II}. {C}alcul diff\'erentiel}, Chelsea Publishing,
  1972.

\bibitem{WhittakerWatson}
E.T. Whittaker and G.N. Watson, \emph{A course of modern analysis}, Cambridge
  University Press, 1962.

\end{thebibliography}

\end{document}